\documentclass{articlefederico}
\pdfoutput=1

\usepackage[ascii]{inputenc}
\usepackage[USenglish]{babel}
\usepackage{microtype}
\usepackage{amsmath}
\usepackage{amsfonts}
\usepackage{amsthm}
\usepackage{eucal}
\usepackage{tikz}
\usepackage{xypic}
\usepackage{aliascnt}
\usepackage[all,cmtip]{xy}
\usepackage[bookmarksnumbered]{hyperref}
\usepackage[all]{hypcap}

\title{\textsf{Gelfand--Tsetlin Polytopes and Feigin--Fourier--Littelmann Polytopes as\\Marked Poset Polytopes}}

\author{\textsf{Federico Ardila}\thanks{Supported in part by the National Science Foundation CAREER Award DMS-0956178 and the National Science Foundation Grant DMS-0801075.}
\and \textsf{Thomas Bliem}\thanks{Supported by the Deutsche Forschungsgemeinschaft Grant SPP 1388.}
\and \textsf{Dido Salazar}\thanks{Supported in part by the National Science Foundation Grant DGE-0841164.\newline
\emph{E-mail addresses:} \email{federico@math.sfsu.edu}, \email{bliem@math.sfsu.edu}, \email{dusalaza@sfsu.edu}.\newline
\emph{Postal address:} Department of Mathematics, San Francisco State University, 1600 Holloway Ave, San Francisco CA 94132, USA.}}

\hypersetup{pdfauthor={Federico Ardila, Thomas Bliem, and Dido Salazar},pdftitle={Gelfand-Tsetlin polytopes and Feigin-Fourier-Littelmann polytopes as marked poset polytopes}}

\date{}

\urlstyle{same}

\allowdisplaybreaks[4]

\newtheorem{theorem}{Theorem}[section]
\newaliascnt{question}{theorem}
\newtheorem{question}[question]{Question}
\aliascntresetthe{question}
\newaliascnt{proposition}{theorem}

\aliascntresetthe{proposition}
\newaliascnt{lemma}{theorem}

\aliascntresetthe{lemma}
\newaliascnt{corollary}{theorem}

\aliascntresetthe{corollary}

\theoremstyle{definition}

\newaliascnt{definition}{theorem}
\newtheorem{definition}[definition]{Definition}
\aliascntresetthe{definition}
\newaliascnt{example}{theorem}

\aliascntresetthe{example}
\newaliascnt{remark}{theorem}

\aliascntresetthe{remark}
\newaliascnt{conjecture}{theorem}
\newtheorem{conjecture}[conjecture]{Conjecture}
\aliascntresetthe{conjecture}

\def\Snospace~{\S{}}

\newcommand{\email}[1]{\href{mailto:#1}{\nolinkurl{#1}}}
\renewcommand{\epsilon}{\varepsilon}
\renewcommand{\phi}{\varphi}
\newcommand{\definedterm}{\textbf}
\newcommand{\suchthat}{\mid}
\DeclareMathOperator{\height}{ht}
\newcommand{\length}{\ell}

\begin{document}
\maketitle

\begin{abstract}
	\noindent 
	Stanley (1986) showed how a finite partially ordered set gives rise to two polytopes,  called the order polytope and chain polytope, which have the same Ehrhart polynomial despite being quite different combinatorially. We generalize his result to a wider family of polytopes constructed from a poset $P$ with integers assigned to some of its elements.
	
	Through this construction, we explain combinatorially the relationship between the Gelfand--Tsetlin polytopes (1950) and the Feigin--Fourier--Littelmann polytopes (2010), which arise in the representation theory of the special linear Lie algebra.
	We then use the generalized Gelfand--Tsetlin polytopes of Berenstein and Zelevinsky (1989) to propose conjectural analogues of the Feigin--Fourier--Littelmann polytopes corresponding to the symplectic and odd orthogonal Lie algebras.	
\end{abstract}

\section{\textsf{Introduction}}

	Consider the simple complex Lie algebra $\mathfrak{sl}_n$.
	The irreducible representations of $\mathfrak{sl}_n$ are parametrized up to isomorphism by dominant integral weights, i.e., weakly decreasing $n$-tuples of integers determined up to adding multiples of $(1, \ldots, 1)$.
	Given a dominant integral weight $\lambda$, let $V(\lambda)$ denote the corresponding irreducible $\mathfrak{sl}_n$-module.
	The module $V(\lambda)$ has a distinguished basis, the Gelfand--Tsetlin \cite{gelfand1950} basis, parametrized by the points with integral coordinates (``integral points'' or ``lattice points" for short) in the Gelfand--Tsetlin polytope $\mathrm{GT}(\lambda) \subset \mathbf{R}^{n(n-1)/2}$.

	Recently, Feigin, Fourier, and Littelmann \cite{feigin2010} constructed a different basis of $V(\lambda)$, related to the Poincar\'e--Birkhoff--Witt basis of the universal enveloping algebra $U(\mathfrak{n}^-)$, where $\mathfrak{n}^-$ is the span of the negative root spaces.
	Again, the basis elements are parametrized by the integral points in a certain polytope $\mathrm{FFL}(\lambda) \subset \mathbf{R}^{n(n-1)/2}$.
	
	Feigin, Fourier, and Littelmann used two subtle algebraic arguments to prove that their basis indeed spans $V(\lambda)$ and is linearly independent. When they had only produced the first half of the proof, they asked the second author of this paper:
	\begin{question} \label{question} \cite{fourier2010}
		Is there a combinatorial explanation for the fact that $\mathrm{GT}(\lambda)$ and $\mathrm{FFL}(\lambda)$ contain the same number of lattice points?
	\end{question}
	This question provided the motivation for this paper. We answer it by generalizing a result of Stanley \cite{Stanley86} on poset polytopes, as we now describe.
	Let $P$ be a finite poset. 
	Let $A$ be a subset of $P$ which contains all minimal and maximal elements of $P$. Let $\lambda = (\lambda_a)_{a \in A}$ be a vector in $\mathbf{R}^A$, which we think of as a marking of the elements of $A$ with real numbers. We call such a triple $(P,A,\lambda)$ a \definedterm{marked poset}. 

\begin{definition} The \definedterm{marked order polytope}  of $(P,A,\lambda)$ is
	\begin{align*}
		\mathcal{O}(P,A)_\lambda = \{x \in \mathbf{R}^{P-A} \suchthat
		{}& x_p \leq x_q \textrm{ for $p < q$,} \quad
		\lambda_a \leq x_p \textrm{ for $a < p$}, \\
		& x_p \leq \lambda_a \textrm{ for $p < a$}\}, 
	\end{align*}
	where $p$ and $q$ represent elements of $P-A$, and $a$ represents an element of $A$.
	The \definedterm{marked chain polytope}  of $(P,A,\lambda)$ is
	\begin{align*}
		\mathcal{C}(P,A)_\lambda = \{x \in \mathbf{R}_{\geq 0}^{P-A} \suchthat
		{}& x_{p_1} + \cdots + x_{p_k} \le \lambda_b-\lambda_a \\
		& \textrm{for $a < p_1 < \cdots < p_k < b$}\},
	\end{align*}	
where $a,b$ represent elements of $A$, and $p_1, \ldots, p_k$ represent elements of $P-A$.
\end{definition}

	For any polytope with integer coordinates $Q$ there exists a polynomial $E_Q(t)$, the \definedterm{Ehrhart polynomial} of $Q$, with the following property: for every positive integer $n$, the $n$-th dilate $nQ$ of $Q$ contains exactly $E_Q(n)$ lattice points (see \cite{Stanleybook}).
	With this notion, our answer to \autoref{question} is given by the following two results.

	\begin{theorem}\label{maintheorem}
		For any marked poset $(P,A, \lambda)$ with $\lambda \in \mathbf{Z}^A$, the marked order polytope $\mathcal{O}(P,A)_\lambda$ and the marked chain polytope $\mathcal{C}(P,A)_\lambda$  have the same Ehrhart polynomial.
	\end{theorem}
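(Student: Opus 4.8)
The plan is to generalize Stanley's transfer map from \cite{Stanley86} to the marked setting. Given $x \in \mathbf{R}^{P-A}$, write $\tilde{x} \in \mathbf{R}^P$ for its extension with $\tilde{x}_p = x_p$ for $p \in P-A$ and $\tilde{x}_a = \lambda_a$ for $a \in A$. Since $A$ contains all minimal and all maximal elements of $P$, every $p \in P-A$ has at least one element below it and at least one above it in $P$. Define the transfer map $\Phi \colon \mathbf{R}^{P-A} \to \mathbf{R}^{P-A}$ by
\[
	\Phi(x)_p = x_p - \max\{\tilde{x}_q \suchthat q < p\}
\]
and a candidate inverse $\Psi$ by
\[
	\Psi(y)_p = \max\{\lambda_a + y_{p_1} + \cdots + y_{p_{k-1}} + y_p \suchthat a < p_1 < \cdots < p_{k-1} < p\},
\]
the maximum ranging over chains from an element $a \in A$ up to $p$ with $p_1, \ldots, p_{k-1}, p \in P-A$ (when $k = 1$ this is the chain $a < p$, contributing $\lambda_a + y_p$). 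Both maxima are over finite nonempty sets.

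First I would check that $\Phi$ maps $\mathcal{O}(P,A)_\lambda$ into $\mathcal{C}(P,A)_\lambda$. Nonnegativity is immediate, since $\tilde{x}_q \leq x_p$ for all $q < p$ is among the defining inequalities of $\mathcal{O}(P,A)_\lambda$. For a chain $a < p_1 < \cdots < p_k < b$ with $a,b \in A$ and $p_i \in P-A$, the choice $q = a$ gives $\Phi(x)_{p_1} \leq x_{p_1} - \lambda_a$, and the choice $q = p_{i-1}$ gives $\Phi(x)_{p_i} \leq x_{p_i} - x_{p_{i-1}}$ for $i \geq 2$; summing telescopes to $\Phi(x)_{p_1} + \cdots + \Phi(x)_{p_k} \leq x_{p_k} - \lambda_a \leq \lambda_b - \lambda_a$. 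Symmetrically, $\Psi$ maps $\mathcal{C}(P,A)_\lambda$ into $\mathcal{O}(P,A)_\lambda$: the inequalities $x_p \leq x_q$ and $\lambda_a \leq x_p$ follow by comparing maxima over chains, while the inequalities $x_p \leq \lambda_b$ are exactly the chain inequalities defining $\mathcal{C}(P,A)_\lambda$. Then I would verify that $\Psi$ and $\Phi$ are mutually inverse on these polytopes. For $\Psi \circ \Phi = \mathrm{id}$, the telescoping estimate above already shows $\Psi(\Phi(x))_p \leq x_p$, and equality is attained along the chain built by repeatedly replacing the current bottom element by an element realizing the maximum defining $\Phi$, stopping once this element lies in $A$. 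The identity $\Phi \circ \Psi = \mathrm{id}$ is a similar computation comparing the two nested maxima.

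The structural reason the argument closes is that $\Phi$ is piecewise-linear and integral on each piece: on the polyhedral region where, for every $p$, a fixed element $q(p) < p$ realizes $\max\{\tilde{x}_q \suchthat q < p\}$, the map $\Phi$ is the affine map $x_p \mapsto x_p - \tilde{x}_{q(p)}$, whose linear part is lower triangular with $1$'s on the diagonal with respect to any linear extension of $P$ (since $q(p) < p$), hence unimodular; in particular $\Phi$ and $\Psi$ both send $\mathbf{Z}^{P-A}$ into $\mathbf{Z}^{P-A}$. Being mutually inverse integral maps, $\Phi$ restricts to a bijection $\mathcal{O}(P,A)_\lambda \cap \mathbf{Z}^{P-A} \to \mathcal{C}(P,A)_\lambda \cap \mathbf{Z}^{P-A}$. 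Since $n\,\mathcal{O}(P,A)_\lambda = \mathcal{O}(P,A)_{n\lambda}$ and $n\,\mathcal{C}(P,A)_\lambda = \mathcal{C}(P,A)_{n\lambda}$ for every positive integer $n$ (just rescale the defining inequalities), applying the same construction with $n\lambda$ in place of $\lambda$ yields $|n\,\mathcal{O}(P,A)_\lambda \cap \mathbf{Z}^{P-A}| = |n\,\mathcal{C}(P,A)_\lambda \cap \mathbf{Z}^{P-A}|$ for all $n \geq 1$. As $\mathcal{O}(P,A)_\lambda$ is a lattice polytope — at a vertex the tight inequalities connect every element of $P-A$ to some $a \in A$, forcing each coordinate to be an integer $\lambda_a$ — this common count is a polynomial in $n$, so the two Ehrhart polynomials agree.

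I expect the main obstacle to be identifying the correct transfer map. The literal generalization of Stanley's map, in which one subtracts the maximum of $\tilde{x}_q$ over only the elements $q$ covered by $p$, fails once $A$ contains elements strictly below $p$: already on the chain $a < r < p < b$ with $a, r \in A$ and $\lambda_a > \lambda_r$, that map does not surject onto $\mathcal{C}(P,A)_\lambda$. Taking the maximum over all $q < p$ — equivalently, over the elements covered by $p$ together with all elements of $A$ below $p$ — repairs this, and with the right map in place the remaining verifications, while somewhat lengthy, are routine.
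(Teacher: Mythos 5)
Your proof is correct and follows the same overall strategy as the paper's: a piecewise-linear transfer map generalizing Stanley's $\phi$, an explicit inverse obtained by accumulating the $y$-coordinates along chains down to $A$ (the paper defines this inverse by the recursion $\psi(y)_p = y_p + \max\{\psi(y)_q \suchthat p\succ q\}$, which unrolls to your max-over-chains formula), and the observation that both maps preserve integrality. The one substantive difference is the set over which the maximum is taken: the paper's map in \autoref{lem:genPhi} subtracts $\max\{\tilde x_q \suchthat p \succ q\}$ over the elements \emph{covered} by $p$, while yours uses all $q < p$. On $\mathcal{O}(P,A)_\lambda$ the two coincide whenever the marking is order-preserving on comparable pairs of $A$ (i.e.\ $a < b$ implies $\lambda_a \le \lambda_b$), since then $\tilde x$ is weakly increasing along every chain and the maximum over all of $\{q \suchthat q<p\}$ is attained at a cover. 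Your closing observation is accurate: without that hypothesis the cover-based map need not even land in $\mathcal{C}(P,A)_\lambda$ --- in your example $a < r < p < b$ with $\lambda_a > \lambda_r$, the inequality $y_{p_1} \le x_{p_1} - \lambda_a$ invoked in the paper's proof fails --- so the paper's argument implicitly assumes an order-preserving marking, a hypothesis that is satisfied in all of its applications (where $\lambda$ is a partition) but is not part of \autoref{vyCu2e3F}. Your variant proves the theorem without that hypothesis. Two minor points: the unimodularity digression is unnecessary, since it is immediate that $\Phi$ and $\Psi$ send integer points to integer points when $\lambda \in \mathbf{Z}^A$; and the paper replaces your dilation step $n\,\mathcal{O}(P,A)_\lambda = \mathcal{O}(P,A)_{n\lambda}$ by the equivalent statement that the transfer map restricts to a bijection on $\tfrac1m\mathbf{Z}^{P-A}$-points.
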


	\begin{theorem}
		For every partition $\lambda$ there exists a marked poset $(P,A,\lambda)$ such that $\mathrm{GT}(\lambda) = \mathcal{O}(P,A)_\lambda$ and $\mathrm{FFL}(\lambda) = \mathcal{C}(P,A)_\lambda$.
	\end{theorem}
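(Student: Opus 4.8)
The plan is to produce one marked poset $(P,A,\lambda)$ — read off directly from the combinatorics of Gelfand--Tsetlin patterns — and then verify the two equalities essentially by unwinding definitions: the first, $\mathrm{GT}(\lambda)=\mathcal{O}(P,A)_\lambda$, is immediate from the way Gelfand--Tsetlin patterns are cut out by interlacing inequalities, while the second, $\mathrm{FFL}(\lambda)=\mathcal{C}(P,A)_\lambda$, rests on identifying the inequalities defining $\mathrm{FFL}(\lambda)$ with chain inequalities in $P$. Concretely, for a partition $\lambda=(\lambda_1\ge\cdots\ge\lambda_n)$ we take $P$ to be the poset on the triangular array of symbols $\{x_j^{(k)}:1\le j\le k\le n\}$ with covering relations $x_j^{(k-1)}<x_j^{(k)}$ and $x_{j+1}^{(k)}<x_j^{(k-1)}$ (so that $x_j^{(k)}<x_{j'}^{(k')}$ precisely when the interlacing pattern forces the $(j',k')$ entry to be at least the $(j,k)$ entry), with $A=\{x_1^{(n)},\dots,x_n^{(n)}\}$ the ``top row'', marked by $\lambda_{x_j^{(n)}}:=\lambda_j$. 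A quick check of the covering relations shows that $x_n^{(n)}$ is the unique minimal element of $P$ and $x_1^{(n)}$ the unique maximal one, so $(P,A,\lambda)$ is a bona fide marked poset; its free coordinate set $P-A=\{x_j^{(k)}:1\le j\le k\le n-1\}$ is in natural bijection with the interior entries of a Gelfand--Tsetlin pattern and, via $(j,k)\mapsto(j,\,j+n-1-k)$, with the positive roots $\beta_{p,q}=\alpha_p+\cdots+\alpha_q$ of $\mathfrak{sl}_n$.

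For $\mathrm{GT}(\lambda)=\mathcal{O}(P,A)_\lambda$: the marked order polytope is cut out by $x_p\le x_q$ over all $p<q$ in $P$ (equivalently, over the covering relations, the rest being transitive consequences) together with the boundary conditions $\lambda_a\le x_p$ for $a<p$ and $x_p\le\lambda_a$ for $p<a$. The covering relations among unmarked elements translate term by term into the interlacing inequalities relating consecutive interior rows of a pattern, while the marked boundary conditions read $\lambda_{j+1}\le x_j^{(n-1)}\le\lambda_j$ for $1\le j\le n-1$, i.e. the interlacing between the top row and the second row. Since $\mathrm{GT}(\lambda)$ is by definition cut out by exactly these inequalities and both polytopes are closed under taking transitive consequences of their defining inequalities, the two descriptions coincide. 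This step is routine.

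The content is $\mathrm{FFL}(\lambda)=\mathcal{C}(P,A)_\lambda$. The marked chain polytope imposes $x\ge 0$ together with $x_{p_1}+\cdots+x_{p_k}\le\lambda_b-\lambda_a$ for every chain $a<p_1<\cdots<p_k<b$ with $a,b\in A$ and $p_i\in P-A$; since the coordinates are nonnegative, only the \emph{saturated} such chains give non-redundant inequalities. Because all of $A$ lies in the top row, every saturated chain between two marked elements has the form $x_s^{(n)}<x_{s-1}^{(n-1)}<\cdots<x_r^{(n-1)}<x_r^{(n)}$ with $r<s$, the step out of $x_s^{(n)}$ and the step into $x_r^{(n)}$ being forced. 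Passing to root coordinates via $(j,k)\mapsto(j,\,j+n-1-k)$, the interior of such a chain, listed in increasing order, is a sequence of positive roots $\alpha_{s-1}=\beta_{s-1,s-1},\dots,\beta_{r,r}=\alpha_r$ in which each covering step of the chain decreases the first root index by one or decreases the second by one; reading the list in reverse gives a sequence from $\alpha_r$ to $\alpha_{s-1}$ in which each step increases an index by one, which is exactly a Dyck path from $\alpha_r$ to $\alpha_{s-1}$ in the sense of Feigin--Fourier--Littelmann (staying inside the triangular array corresponds to remaining inside the region $p\le q$ of valid roots), and the correspondence is a bijection. Under it the bound telescopes, $\lambda_b-\lambda_a=\lambda_r-\lambda_s=\sum_{p=r}^{s-1}(\lambda_p-\lambda_{p+1})$, which is precisely the bound that $\mathrm{FFL}(\lambda)$ attaches to a Dyck path from $\alpha_r$ to $\alpha_{s-1}$. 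Hence the two polytopes are defined by the same inequalities.

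We expect the main obstacle to be organizational rather than conceptual: aligning the three coordinate systems (interior pattern entries, positive roots, free elements of $P$), pinning down the exact Feigin--Fourier--Littelmann notion of Dyck path in whatever convention we adopt, and checking that the correspondence between saturated chains between marked elements and Dyck paths is a genuine bijection — neither skipping nor double-counting inequalities, and correctly accommodating the degenerate one-step chains $x_{a+1}^{(n)}<x_a^{(n-1)}<x_a^{(n)}$, which furnish the constraints on the simple-root coordinates $\beta_{a,a}=\alpha_a$ with bound $\lambda_a-\lambda_{a+1}$. Beyond the definitions and this dictionary, nothing further is needed.
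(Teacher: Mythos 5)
Your proposal is correct and takes essentially the same approach as the paper: you construct exactly the marked poset the paper uses (the Gelfand--Tsetlin board rotated into a Hasse diagram, with the top row marked by $\lambda_1,\dots,\lambda_n$), and both identifications are then definitional. The paper simply exhibits this marked Hasse diagram and declares both equalities ``immediate from the definitions,'' so your dictionary between saturated chains with interior in $P-A$ and Feigin--Fourier--Littelmann Dyck paths is a correct elaboration of details the paper leaves implicit.
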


	We also consider the extension of these constructions to other Lie algebras. Berenstein and Zelevinsky proposed a construction of generalized Gelfand--Tsetlin polytopes  \cite{berenstein1989} for other semisimple Lie algebras. For the symplectic and odd orthogonal Lie algebras, their polytopes are also in the family of marked order polytopes. Therefore \autoref{maintheorem} yields candidates for the Feigin--Fourier--Littelmann polytopes in types $B_n$ and $C_n$.

	The paper is organized as follows.
	In \autoref{F6Z6kCUH} we discuss the relevant aspects of the representation theory of the simple complex Lie algebras $\mathfrak{sl}_n$.
	Section \ref{VHvEjfL2} treats marked order and chain polytopes, and gives a bijection between their lattice points.
	Section \ref{sec:applications} discusses the application of the combinatorial results of \autoref{VHvEjfL2} to the representation theoretic polytopes that interest us. 

	We note that the combinatorial \autoref{VHvEjfL2} is self-contained, and may be of independent interest beyond the representation theoretic application. A possible way to read this article is to skip \autoref{F6Z6kCUH} and continue there directly.
	
\section{\textsf{Preliminaries}}
\label{F6Z6kCUH}

	Consider the simple complex Lie algebra $\mathfrak{sl}_n$.
Let $\mathfrak{h}$ be the Cartan subalgebra consisting of its diagonal matrices.
	For $i = 1, \ldots, n$, let $\epsilon_i \in \mathfrak{h}^*$ denote the projection onto the $i$-th diagonal component.
	As $\epsilon_1 + \cdots + \epsilon_n = 0$, the coefficient vector of an integral weight is only determined as an element of $\mathbf{Z}^n/\langle(1, \ldots, 1)\rangle$.
	We identify an integral weight with the corresponding equivalence class of coefficient vectors.
	If $\lambda$ is a weight and we use the symbol $\lambda$ in a context where it has to be interpreted as an $n$-tuple $\lambda = (\lambda_1, \ldots, \lambda_n)$, we use the convention that a representative has been chosen implicitly.
	Fix simple roots $\alpha_i = \epsilon_i - \epsilon_{i+1}$ for $i = 1, \ldots, n-1$.
	The corresponding fundamental weights are $\omega_i = \epsilon_1 + \cdots + \epsilon_i$.
	Hence dominant integral weights correspond to weakly decreasing $n$-tuples of integers, or partitions.	

	Given a dominant integral weight $\lambda$, the associated Gelfand--Tsetlin \cite{gelfand1950} polytope $\mathrm{GT}(\lambda)$ is defined as follows:
	Consider the board given in \autoref{zczmmXWC}.	
	
	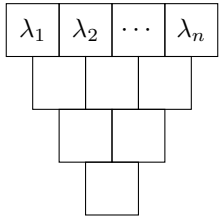
\begin{figure}[h]
		\centering
		\begin{tikzpicture}[x=2em,y=-2em]
			\draw (0,0) rectangle +(1,1);
			\draw (1,0) rectangle +(1,1);
			\draw (2,0) rectangle +(1,1);
			\draw (3,0) rectangle +(1,1);
			\draw (0.5,1) rectangle +(1,1);
			\draw (1.5,1) rectangle +(1,1);
			\draw (2.5,1) rectangle +(1,1);
			\draw (1,2) rectangle +(1,1);
			\draw (2,2) rectangle +(1,1);
			\draw (1.5,3) rectangle +(1,1);
			\draw (0.5,0.5) node{$\lambda_1$};
			\draw (1.5,0.5) node{$\lambda_2$};
			\draw (2.5,0.5) node{$\cdots$};
			\draw (3.5,0.5) node{$\lambda_n$};
		\end{tikzpicture}
		\caption{Board defining Gelfand--Tsetlin patterns.}
		\label{zczmmXWC}
	\end{figure}
	
	Each one of the $n(n-1)/2$ empty boxes stands for a real variable. The polytope $\mathrm{GT}(\lambda) \subset \mathbf{R}^{n(n-1)/2}$ is given by the fillings of the board with real numbers with the following property:  each number is less than or equal to its upper left neighbor and greater than or equal to its upper right neighbor.
	Note that the ambiguity in choosing an $n$-tuple for the weight $\lambda$ amounts to an integral translation of $\mathrm{GT}(\lambda)$, and hence does not affect its number of integral points.
	In fact, the integral points in $\mathrm{GT}(\lambda)$ parametrize the Gelfand--Tsetlin basis of $V(\lambda)$, hence $\lvert \mathrm{GT}(\lambda) \cap \mathbf{Z}^{n(n-1)/2} \rvert = \dim V(\lambda)$.

	Feigin, Fourier, and Littelmann \cite{feigin2010} associate a different polytope with a dominant integral weight $\lambda$ as follows:
	The positive roots of $\mathfrak{sl}_n$ are $\Phi_+ = \{ \alpha_{i,j} \suchthat 0 \leq i < j \leq n \}$, where $\alpha_{i,j} = \epsilon_i - \epsilon_j$.
	A Dyck path is by definition a sequence $(\beta(0), \ldots, \beta(k))$ in $\Phi_+$ such that $\beta(0)$ and $\beta(k)$ are simple, and if $\beta(l) = \alpha_{i,j}$, then either $\beta(l+1) = \alpha_{i+1,j}$ or $\beta(l+1) = \alpha_{i,j+1}$.
	Denote the coordinates on $\mathbf{R}^{\Phi_+}$ by $s_\beta$ for $\beta \in \Phi_+$.
	Let $\lambda = m_1 \omega_1 + \cdots + m_{n-1} \omega_{n-1}$.
	Then the polytope $\mathrm{FFL}(\lambda) \subset \mathbf{R}^{\Phi_+}$ is given by the inequalities \[ s_\beta \geq 0 \] for all $\beta \in \Phi_+$ and \[ s_{\beta(0)} + \cdots + s_{\beta(k)} \leq m_i + \cdots + m_j \] for all Dyck paths $(\beta(0), \ldots, \beta(k))$ such that $\beta(0) = \alpha_i$ and $\beta(k) = \alpha_j$.

	For all $\alpha \in \Phi_+$, let $f_\alpha$ be a nonzero element of the root space $\mathfrak{g}_{-\alpha}$.
	Let $v_\lambda$ be a highest weight vector of $V(\lambda)$.
	Fix any total order on $\Phi_+$. As $s$ ranges over the lattice points of $\mathrm{FFL}(\lambda)$, the elements $\bigl( \prod_{\alpha \in \Phi_+} f_\alpha^{s_\alpha} \bigr) v_\lambda$ form a basis of $V(\lambda)$ \cite[Th.\ 3.11]{feigin2010}.
	Hence $\lvert \mathrm{FFL}(\lambda) \cap \mathbf{Z}^{\Phi_+} \rvert = \dim V(\lambda)$.
	
	The previous discussion shows that $\lvert \mathrm{FFL}(\lambda) \cap \mathbf{Z}^{\Phi_+} \rvert = \lvert \mathrm{GT}(\lambda) \cap \mathbf{Z}^{n(n-1)/2} \rvert$. In the sequel, we give a combinatorial explanation and an extension of this fact.

\section{\textsf{Marked poset polytopes}}
\label{VHvEjfL2}

	To any finite poset $P$, Stanley \cite{Stanley86} associated two polytopes in $\mathbf{R}^P$: the order polytope and the chain polytope.
	He showed that there is a continuous, piecewise linear bijection between them, which restricts to a bijection between their sets of integral points.
	In this section we construct a generalization of the order and chain polytopes, and prove  the analogous result. 
	We begin with a review of Stanley's work.

\subsection{\textsf{Stanley's order and chain polytopes}}

	Let $P$ be a finite poset. For $p, q \in P$ we say that $p$ \definedterm{covers} $q$, and write $p \succ q$, when $p>q$ and there is no $r \in P$ with $p>r>q$. We identify $P$ with its \definedterm{Hasse diagram}: the graph with vertex set $P$, having an edge going down from $p$ to $q$ whenever $p$ covers $q$.
	
	The \definedterm{order polytope} and \definedterm{chain polytope} of $P$ are,
	\begin{align*}
		\mathcal{O}(P)
		&= \{x \in [0,1]^P \suchthat x_p \leq x_q \textrm{ for all $p < q$}\},\textrm{ and} \\
		\mathcal{C}(P)
		&= \{x \in [0,1]^P \suchthat x_{p_1} + \cdots + x_{p_k} \le 1 \textrm{ for all chains $p_1 < \cdots < p_k$}\}.
	\end{align*}
respectively.

	Stanley proved that, even though $\mathcal{O}(P)$ and $\mathcal{C}(P)$ can have quite different combinatorial structures, they have the same Ehrhart polynomial.	
	He did this as follows. Define the \definedterm{transfer map} $\phi : \mathbf{R}^P \to \mathbf{R}^P$ by
	\begin{equation} \label{uVhsMXQ7}
		\phi(x)_p = \begin{cases}
			x_p & \text{if $p$ is minimal,} \\
			\min{} \{ x_p - x_q \suchthat p \succ q \} & \text{otherwise}
		\end{cases}
	\end{equation}
	for $x \in \mathbf{R}^P$, $p \in P$. Then:
	
	\begin{theorem}[{\cite[Theorem 3.2]{Stanley86}}] \label{stanley's-theorem}
		The transfer map $\phi$ restricts to a continuous, piecewise linear bijection from $\mathcal{O}(P)$ onto $\mathcal{C}(P)$.
		For any $m \in \mathbf{N}$, $\phi$ restricts to a bijection from $\mathcal{O}(P) \cap \frac 1m \mathbf{Z}^P$ onto $\mathcal{C}(P) \cap \frac 1m \mathbf{Z}^P$.
	\end{theorem}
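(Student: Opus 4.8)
The plan is to prove the theorem by exhibiting an explicit inverse to the transfer map and then verifying all the stated properties by direct computation. Define a second map $\psi : \mathbf{R}^P \to \mathbf{R}^P$ by
\[
	\psi(y)_p = \max\bigl\{ y_{p_1} + \cdots + y_{p_k} \suchthat p_1 < \cdots < p_k = p \bigr\},
\]
the maximum being taken over all chains of $P$ with top element $p$, including the one-element chain $\{p\}$. I will show that $\phi$ maps $\mathcal{O}(P)$ into $\mathcal{C}(P)$, that $\psi$ maps $\mathcal{C}(P)$ into $\mathcal{O}(P)$, and that $\psi$ and $\phi$ restrict to mutually inverse bijections between these two polytopes. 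The remaining claims --- continuity, piecewise linearity, and the statement about $\tfrac1m\mathbf{Z}^P$ --- will then follow formally.

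First I would check that $\phi(\mathcal{O}(P)) \subseteq \mathcal{C}(P)$. For $x \in \mathcal{O}(P)$ each coordinate $\phi(x)_p$ is nonnegative, since $q < p$ forces $x_q \le x_p$, and is at most $1$. Given a chain $p_1 < \cdots < p_k$, the point to notice is that although the chain need not be saturated, consecutive elements $p_{i-1} < p_i$ are joined by some saturated chain, so there is a $q$ with $p_i \succ q \ge p_{i-1}$, whence $\phi(x)_{p_i} \le x_{p_i} - x_q \le x_{p_i} - x_{p_{i-1}}$; together with $\phi(x)_{p_1} \le x_{p_1}$ this telescopes to $\sum_i \phi(x)_{p_i} \le x_{p_k} \le 1$, so $\phi(x) \in \mathcal{C}(P)$. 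The same telescoping bound shows $\psi(\phi(x))_p \le x_p$ for every $p$; and equality holds because one can descend from $p$ along cover relations, at each step choosing an element that realizes the minimum defining $\phi$, until reaching a minimal element, thereby producing a chain topped by $p$ whose $\phi(x)$-sum is exactly $x_p$. Hence $\psi \circ \phi$ is the identity on $\mathcal{O}(P)$.

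Dually, for $y \in \mathcal{C}(P)$ one checks that $\psi(y) \in \mathcal{O}(P)$: the chain inequalities defining $\mathcal{C}(P)$ give $\psi(y)_p \le 1$, nonnegativity is immediate, and appending $q$ to a chain topped by $p$ whenever $p < q$ gives $\psi(y)_p \le \psi(y)_q$. For non-minimal $p$, splitting off the top element of an optimal chain yields the recurrence $\psi(y)_p = y_p + \max_{q < p}\psi(y)_q$, and since $\psi(y)$ is order-preserving the maximum over all $q<p$ equals the maximum over the $q$ covered by $p$; substituting into the definition of $\phi$ gives $\phi(\psi(y))_p = \psi(y)_p - \max_{q:\, p \succ q}\psi(y)_q = y_p$, with the minimal case being trivial. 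Hence $\phi \circ \psi$ is the identity on $\mathcal{C}(P)$.

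I expect the main obstacle to be exactly this recurring issue: the polytope $\mathcal{C}(P)$ and the map $\psi$ are defined through arbitrary chains, whereas $\phi$ is defined through cover relations, so one must repeatedly move between the two, using that along a saturated refinement of a chain the relevant values ($x$-values, respectively $\psi(y)$-values) are monotone. Once that bookkeeping is in place, the rest is routine: $\phi$ and $\psi$ are, respectively, a pointwise minimum and a pointwise maximum of finitely many linear functions, hence continuous and piecewise linear; and since they are assembled solely from addition, subtraction, $\min$, and $\max$ with integer coefficients, each of them maps $\tfrac1m\mathbf{Z}^P$ into itself, so the bijection between $\mathcal{O}(P)$ and $\mathcal{C}(P)$ restricts to a bijection between $\mathcal{O}(P)\cap\tfrac1m\mathbf{Z}^P$ and $\mathcal{C}(P)\cap\tfrac1m\mathbf{Z}^P$.
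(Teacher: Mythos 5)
Your proof is correct and takes essentially the same route the paper uses for its generalization in \autoref{lem:genPhi} (the paper only cites this special case from Stanley): construct the inverse $\psi$ explicitly, check $\phi(\mathcal{O}(P))\subseteq\mathcal{C}(P)$ and $\psi(\mathcal{C}(P))\subseteq\mathcal{O}(P)$, verify both compositions are the identity, and observe that both maps are built from $\min$, $\max$, and integer-coefficient linear forms, hence preserve $\tfrac1m\mathbf{Z}^P$. The only cosmetic difference is that you define $\psi$ in closed form as a maximum over chains and then derive the recurrence $\psi(y)_p = y_p + \max\{\psi(y)_q \suchthat p\succ q\}$, whereas the paper takes that recurrence as the definition and argues by induction on height.
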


\subsection{\textsf{Marked poset polytopes}}	

	We now recall the definition of marked order and chain polytopes, and prove that they satisfy a generalization of \autoref{stanley's-theorem}.

	An element of a poset is called \definedterm{extremal} if it is maximal or minimal.
	
	\begin{definition} \label{vyCu2e3F}
		A \definedterm{marked poset} $(P,A,\lambda)$ consists of a finite poset $P$, a subset $A \subseteq P$ containing all its extremal elements, and a vector $\lambda \in \mathbf{R}^A$.
		We identify it with the \definedterm{marked Hasse diagram}, where we label the elements $a \in A$ with $\lambda_a$ in the Hasse diagram of $P$.
	\end{definition}
	
	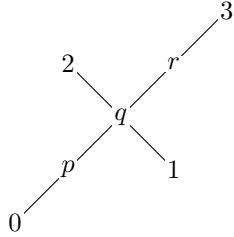
\begin{figure}[h]
		\centering
		\begin{tikzpicture}[x=2em,y=2em]
			\draw (0,0) -- (4,4);
			\draw (3,1) -- (1,3);
			\draw (0,0) node[fill=white,inner sep=0.2ex]{$0$};
			\draw (1,1) node[fill=white,inner sep=0.2ex]{$p$};
			\draw (2,2) node[fill=white,inner sep=0.2ex]{$q$};
			\draw (3,3) node[fill=white,inner sep=0.2ex]{$r$};
			\draw (4,4) node[fill=white,inner sep=0.2ex]{$3$};
			\draw (3,1) node[fill=white,inner sep=0.2ex]{$1$};
			\draw (1,3) node[fill=white,inner sep=0.2ex]{$2$};
		\end{tikzpicture}
		\caption{A marked Hasse diagram defining a partial order on the set $P = \{p, q, r\} \cup A$ with $\lvert A \rvert = 4$ and $\lambda = (3,2,1,0) \in \mathbf{R}^A$.}
		\label{fig:marked-hasse-diagram}
	\end{figure}

	\begin{definition}
		The \definedterm{marked order polytope}  of $(P,A,\lambda)$ is
		\begin{align*}
			\mathcal{O}(P,A)_\lambda
			= \{x \in \mathbf{R}^{P-A} \suchthat
			{}& x_p \leq x_q \textrm{ for $p < q$,} \\
			& \lambda_a \leq x_p \textrm{ for $a < p$}, \\
			& x_p \leq \lambda_a \textrm{ for $p < a$}\}, 
		\end{align*}
		where $p$ and $q$ represent elements of $P-A$, and $a$ represents an element of $A$.
		The \definedterm{marked chain polytope}  of $(P,A,\lambda)$ is
		\begin{align*}
			\mathcal{C}(P,A)_\lambda
			= \{x \in \mathbf{R}_{\geq 0}^{P-A} \suchthat
			{}& x_{p_1} + \cdots + x_{p_k} \le \lambda_b-\lambda_a \\
			& \textrm{for $a < p_1 < \cdots < p_k < b$}\},
		\end{align*}	
		where $a,b$ represent elements of $A$, and $p_1, \ldots, p_k$ represent elements of $P-A$.
	\end{definition}

	\begin{figure}[h]
		\centering
		\begin{tikzpicture}[x=(-120:5em),y=(0:7em),z=(90:7em)]
			\draw[->] (0,0,0) -- (1.1,0,0) node[below]{$x_p$};
			\draw[->] (0,0,0) -- (0,1.1,0) node[right]{$x_q$};
			\draw[->] (0,0,0) -- (0,0,1.1) node[above]{$x_r$};
			\draw (1,0,0) -- (1,-.05,0) node[left]{$3$};
			\draw (0,1,0) -- (0,1,-.05) node[below]{$3$};
			\draw (0,0,1) -- (0,-.05,1) node[left]{$3$};
			\draw (0,0.33,1) -- (0.33,0.33,1) -- (0.33,0.33,0.33) -- (0,0.33,0.33) -- cycle;
			\draw (0,0.67,1) -- (0.67,0.67,1) -- (0.67,0.67,0.67) -- (0,0.67,0.67) -- cycle;
			\draw (0,0.33,1) -- (0,0.67,1);
			\draw (0.33,0.33,1) -- (0.67,0.67,1);
			\draw (0.33,0.33,0.33) -- (0.67,0.67,0.67);
			\draw (0,0.33,0.33) -- (0,0.67,0.67);
			\draw[dashed] (0,0,1) -- (1,0,1) -- (1,1,1) -- (1,1,0) -- (1,0,0) -- (1,0,1);
			\draw[dashed] (1,1,0) -- (0,1,0) -- (0,1,1);
			\draw[dashed] (0,0,1) -- (0,0.33,1);
			\draw[dashed] (0,1,1) -- (0,0.67,1);
			\draw[dashed] (0,1,1) -- (1,1,1);
			\draw[dashed] (0,0,0) -- (0.33,0.33,0.33);
			\draw[dashed] (1,1,1) -- (0.67,0.67,0.67);
			\draw[dashed] (0,0,0) -- (0,0.33,0.33);
			\draw[dashed] (0,1,1) -- (0,0.67,0.67);
			\draw[dashed] (0,0,1) -- (0.33,0.33,1);
			\draw[dashed] (1,1,1) -- (0.67,0.67,1);
		\end{tikzpicture}
		\quad
		\begin{tikzpicture}[x=(-120:5em),y=(0:7em),z=(90:7em)]
			\draw[->] (0,0,0) -- (1.1,0,0) node[below]{$x_p$};
			\draw[->] (0,0,0) -- (0,1.1,0) node[right]{$x_q$};
			\draw[->] (0,0,0) -- (0,0,1.1) node[above]{$x_r$};
			\draw (1,0,0) -- (1,-.05,0) node[left]{$3$};
			\draw (0,1,0) -- (0,1,-.05) node[below]{$3$};
			\draw (0,0,1) -- (0,-.05,1) node[left]{$3$};
			\draw (0,0,0.67) -- (0.33,0,0.67) -- (0.67,0,0.33) -- (0.67,0,0) -- (0.33,0.33,0) -- (0,0.33,0) -- (0,0.33,0.33) -- cycle;
			\draw (0.67,0,0.33) -- (0.33,0.33,0.33) -- (0.33,0.33,0);
			\draw (0.33,0,0.67) -- (0.33,0.33,0.33) -- (0,0.33,0.33);
			\draw[dashed] (0,1,0) -- (0,1,1) -- (1,1,1) -- (1,0,1) -- (1,0,0);
			\draw[dashed] (0,1,1) -- (0,0,1) -- (1,0,1);
			\draw[dashed] (1,0,0) -- (1,1,0) -- (0,1,0);
			\draw[dashed] (1,1,0) -- (1,1,1);
		\end{tikzpicture}
		\caption{The marked order polytope of the marked poset in \autoref{fig:marked-hasse-diagram} is given by the inequalities $0 \leq x_p \leq x_q \leq x_r \leq 3$ and $1 \leq x_q \leq 2$. The marked chain polytope is given by the inequalities $x_p, x_q, x_r \geq 0$, $x_p + x_q + x_r \leq 3$, $x_p + x_q \leq 2$, $x_q + x_r \leq 2$, and $x_q \leq 1$. Note that they are not combinatorially isomorphic.}
		\label{fig:marked-order-polytope}
	\end{figure}
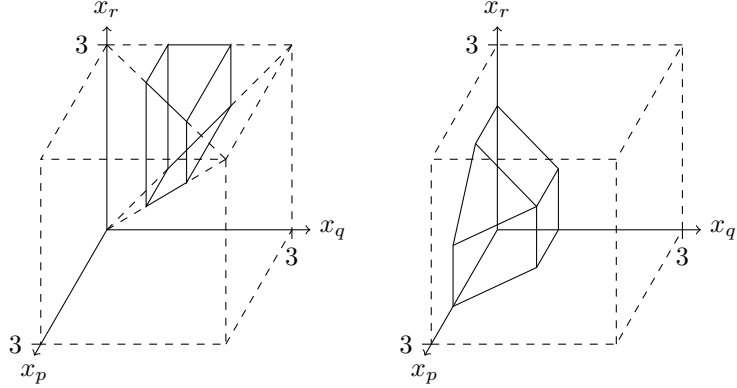
	
	Stanley's construction is a special case of ours  as follows:
	Given any finite poset $P$, add a new smallest and largest element to obtain $\tilde P = P \cup \{ \hat 0, \hat 1 \}$ for $\hat 0, \hat 1 \notin P$.
	Let $A = \{ \hat 0, \hat 1 \}$ and $\lambda = (0, 1)$.
	Then
	\[
		\mathcal{O}(P) = \mathcal{O}(\tilde P, A)_{\lambda}
		\quad \text{and} \quad
		\mathcal{C}(P) = \mathcal{C}(\tilde P, A)_{\lambda} .
	\]
	
	The following definitions will be needed in the proof of \autoref{lem:genPhi}:
	The \definedterm{length} of a chain $C=\{p_1 < \cdots < p_k\} \subseteq P$ is $\length(C) = k - 1$.
	The \definedterm{height} of $p \in P$ is the length of the longest chain ending at $p$.
	If $P$ is graded, the height of an element is just its rank.
	
	\begin{theorem} \label{lem:genPhi}
		Let $(P,A,\lambda)$ be a marked poset. The map $\tilde\phi: \mathbf{R}^{P-A} \to \mathbf{R}^{P-A}$ defined by 
		\[
			\tilde\phi(x)_p
			=	\min{} \left(\{ x_p - x_q \suchthat p \succ q, q \notin A\} \cup \{x_p - \lambda_q \suchthat p \succ q, q \in A \}\right) 
		\]
		for each $p \in P-A$ restricts to a continuous, piecewise affine bijection from $\mathcal{O}(P,A)_\lambda$ onto $\mathcal{C}(P,A)_\lambda$.
	\end{theorem}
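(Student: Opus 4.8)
The plan is to adapt Stanley's strategy for \autoref{stanley's-theorem}: exhibit an explicit inverse $\tilde\psi$ of $\tilde\phi$, show that $\tilde\phi$ carries $\mathcal{O}(P,A)_\lambda$ into $\mathcal{C}(P,A)_\lambda$ and $\tilde\psi$ carries $\mathcal{C}(P,A)_\lambda$ into $\mathcal{O}(P,A)_\lambda$, and verify $\tilde\psi\circ\tilde\phi=\mathrm{id}$ and $\tilde\phi\circ\tilde\psi=\mathrm{id}$. These two identities force $\tilde\psi$ to be a two-sided inverse of $\tilde\phi$ between the two polytopes, hence a bijection; and since $\tilde\phi$ is a pointwise minimum of affine functions of $x$ while $\tilde\psi$ (defined below) is a pointwise maximum of affine functions, both maps are automatically continuous and piecewise affine, so the genuine content is the bijection. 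Throughout it is convenient to extend $x\in\mathbf{R}^{P-A}$ to $\hat x\in\mathbf{R}^P$ by $\hat x_a=\lambda_a$ for $a\in A$; then $x\in\mathcal{O}(P,A)_\lambda$ says precisely that $\hat x$ is order-preserving on $P$, and $\tilde\phi(x)_p=\min\{\hat x_p-\hat x_q\mid p\succ q\}$.

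First I would check $\tilde\phi(\mathcal{O}(P,A)_\lambda)\subseteq\mathcal{C}(P,A)_\lambda$. Every $p\in P-A$ is non-minimal (as $A$ contains all minimal elements), so it covers some $q$, and $\hat x_p-\hat x_q\ge 0$ by order-preservation, giving $\tilde\phi(x)_p\ge 0$. For a chain $a<p_1<\cdots<p_k<b$ with $a,b\in A$ and $p_i\in P-A$, refine it to a saturated chain $a=r_0\prec r_1\prec\cdots\prec r_m=b$ through $p_1,\dots,p_k$. The telescoping sum $\sum_{j=1}^m(\hat x_{r_j}-\hat x_{r_{j-1}})$ equals $\lambda_b-\lambda_a$; each summand is nonnegative, and whenever $r_j\in P-A$ we have $\tilde\phi(x)_{r_j}\le\hat x_{r_j}-\hat x_{r_{j-1}}$ because $r_j\succ r_{j-1}$. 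Since $\{p_1,\dots,p_k\}\subseteq\{r_j\mid r_j\in P-A\}$, summing the relevant terms yields $\tilde\phi(x)_{p_1}+\cdots+\tilde\phi(x)_{p_k}\le\lambda_b-\lambda_a$.

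Next I would introduce the candidate inverse
\[
\tilde\psi(y)_p=\max\bigl\{\lambda_a+y_{p_1}+\cdots+y_{p_k}\ \bigm|\ a\in A,\ p_1,\dots,p_k\in P-A,\ a<p_1<\cdots<p_k=p\bigr\},
\]
a maximum over a nonempty finite set since every $p\in P-A$ lies above some element of $A$. Using $y\ge 0$ and the defining inequalities of $\mathcal{C}(P,A)_\lambda$, one checks the three families of inequalities defining $\mathcal{O}(P,A)_\lambda$ by extending or truncating an optimal chain: for $p<q$ in $P-A$, extending an optimal chain for $\tilde\psi(y)_p$ by $q$ gives $\tilde\psi(y)_q\ge\tilde\psi(y)_p+y_q\ge\tilde\psi(y)_p$; for $p<b$ with $b\in A$, appending $b$ to an optimal chain for $p$ and invoking the chain inequality gives $\tilde\psi(y)_p\le\lambda_b$; and $\lambda_a\le\tilde\psi(y)_p$ for $a<p$ comes from the length-one chain $a<p$.

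It remains to verify the two compositions. For $\tilde\psi\circ\tilde\phi=\mathrm{id}$, let $x\in\mathcal{O}(P,A)_\lambda$ and $y=\tilde\phi(x)$: the inequality $\tilde\psi(y)_p\le x_p$ is the telescoping estimate of the second paragraph applied to a saturated refinement of an arbitrary competing chain for $p$, while $\tilde\psi(y)_p\ge x_p$ follows by descending from $p$ along the covers that realize the minima in the definition of $\tilde\phi$ until one first reaches $A$, which produces a chain whose $\tilde\psi$-value telescopes back to $x_p$. For $\tilde\phi\circ\tilde\psi=\mathrm{id}$, let $y\in\mathcal{C}(P,A)_\lambda$ and $x=\tilde\psi(y)$: the bound $\tilde\phi(x)_p\ge y_p$ holds since $x_p\ge\hat x_q+y_p$ for every $q\succ p$ (extend an optimal chain for $x_q$ by $p$, or use the chain $q<p$ when $q\in A$), and $\tilde\phi(x)_p\le y_p$ requires exhibiting one cover $q\succ p$ with $x_p\le\hat x_q+y_p$. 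I expect this last point to be where the argument needs the most care: one must make the notion of an optimal chain realizing $x_p=\tilde\psi(y)_p$ precise and handle the possibility that such a chain meets $A$ in its interior, so that the witnessing cover $q$ may itself belong to $A$. The resolution is that, since $y\ge 0$, inserting elements of $P-A$ into a chain never decreases its value, so an optimal chain may be chosen admitting no such insertion; the element of $P$ directly below $p$ in a saturated refinement of that chain is then the required $q$, and combining the inequality defining $\tilde\psi(y)$ at the last $P-A$ element with the chain inequality of $\mathcal{C}(P,A)_\lambda$ (or with the order inequality bounding it by $\lambda_q$ when $q\in A$) supplies $x_p\le\hat x_q+y_p$. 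With both compositions equal to the identity, the continuous, piecewise-affine bijection follows at once.
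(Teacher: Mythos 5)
Your proof is correct (to the same standard as the paper's) but takes a genuinely different route to the inverse map, which is where all the content lies. The paper defines $\psi(y)_p$ \emph{recursively} up the Hasse diagram, via $\psi(y)_a=\lambda_a$ for $a\in A$ and $\psi(y)_p=y_p+\max\{\psi(y)_q\mid p\succ q\}$ otherwise, and then verifies both compositions by induction on $\height(p)$, each inductive step being a one-line cancellation of this $\max$ against the $\min$ in $\tilde\phi$. You instead give the closed-form $\tilde\psi(y)_p=\max\{\lambda_a+y_{p_1}+\cdots+y_{p_k}\}$ over all chains $a<p_1<\cdots<p_k=p$; on $\mathcal{C}(P,A)_\lambda$, where $y\geq 0$, this agrees with the paper's recursion (though not on all of $\mathbf{R}^{P-A}$). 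Your formulation pays a price in verifying $\tilde\phi\circ\tilde\psi=\mathrm{id}$ --- you need the insertion-maximal optimal chain and the case split on whether the witnessing cover lies in $A$, which the recursion makes automatic --- but it buys a more transparent proof that $\tilde\psi$ lands in $\mathcal{O}(P,A)_\lambda$: in particular you derive the upper bounds $x_p\leq\lambda_b$ for $p<b$ directly from the chain inequalities, a family of constraints the paper's argument for surjectivity only addresses through cover relations. One caveat applies equally to both proofs: the claim that each summand in your saturated-chain telescoping is nonnegative (the paper's version is the assertion $y_{p_1}\leq x_{p_1}-\lambda_a$), and your case $k=1$ with a cover $q\in A$, $a<q\prec p$, silently use that $\lambda_a\leq\lambda_q$ whenever $a<q$ in $A$. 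This compatibility of the marking with the order is not required by \autoref{vyCu2e3F} but is genuinely needed: for the chain $a\prec q\prec p\prec t$ with $A=\{a,q,t\}$ and $(\lambda_a,\lambda_q,\lambda_t)=(5,0,10)$ one gets $\mathcal{O}(P,A)_\lambda=[5,10]$, $\mathcal{C}(P,A)_\lambda=[0,5]$, and $\tilde\phi(x)_p=x_p$, so $\tilde\phi(\mathcal{O}(P,A)_\lambda)\not\subseteq\mathcal{C}(P,A)_\lambda$. The hypothesis does hold in all the applications of \autoref{sec:applications}, so this is a flaw in the theorem's generality rather than in your argument specifically.
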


	The following alternative description of $\tilde\phi$ may be useful. 
	Let $\phi : \mathbf{R}^P \to \mathbf{R}^P$ be Stanley's transfer map as defined in \eqref{uVhsMXQ7}.
	Let $\pi: \mathbf{R}^P \to \mathbf{R}^{P-A}$ be the canonical projection which forgets the coordinates in $A$, and let $i : \mathbf{R}^{P-A} \to \mathbf{R}^P$ be the canonical inclusion into the fiber over $\lambda \in \mathbf{R}^A$, which adds a coordinate $\lambda_a$ to each $a \in A$.
	Then $\tilde\phi = \pi \circ \phi \circ i$.

	These maps (and some more to be defined in the proof) are illustrated in the following diagram.
			
\begin{large}
\[
\xymatrixrowsep{6pc}
\xymatrixcolsep{3.5pc}
	\xymatrix{ 
	\mathbf{R}^P \ar[r]^{\phi}&  \mathbf{R}^P \ar[d]^\pi \\
	\mathcal{O}(P,A)_\lambda\,\, \ar@/^/[r]^{\tilde\phi} \ar@{^{(}->}[u]^i &  \,\,\mathcal{C}(P,A)_\lambda  \ar@/^/[l]^{\tilde\psi} \ar[lu]^{\psi}
	}
\]
\end{large}
	
	\begin{proof}
		We start by showing that $\tilde\phi(\mathcal{O}(P,A)_\lambda) \subseteq \mathcal{C}(P,A)_\lambda$.
		Let $x \in \mathcal{O}(P, A)_\lambda$ and $y = \tilde\phi(x)$.
		Let $a, b \in A$, and $p_1, \ldots, p_k \in P-A$ be such that $a < p_1 < \cdots < p_k < b$.
		The definition of $\phi$ implies that $y_{p_i} \le x_{p_i} - x_{p_{i-1}}$ for all $i = 2, \ldots, k$ and $y_{p_1} \leq x_{p_1} - \lambda_a$.
		Thus,
		\begin{align*}
			y_{p_1} +  \cdots + y_{p_k}
			&\le (x_{p_1} - \lambda_a) + (x_{p_2} - x_{p_1}) + \cdots + (x_{p_k} - x_{p_{k-1}}) \\
			&= x_{p_k} - \lambda_a
			\leq \lambda_b - \lambda_a .
		\end{align*}
		Hence, $y \in \mathcal{C}(P,A)_\lambda$.
	
		To show that $\tilde\phi$ is bijective, we construct its inverse $\tilde\psi: \mathcal{C}(P, A) \to \mathcal{O}(P, A)$.
		We first define a map $\psi: \mathbf{R}^{P-A} \to \mathbf{R}^{P}$, where we define $\psi(y)_p$ recursively by going up the poset according to the rule:
		\[
			\psi(y)_p =  \begin{cases}
				\lambda_p & \text{if }  p \in A, \\ 
				y_p + \max{} \{\psi(y)_q \suchthat p \succ q \} & \text{if } p \notin A.
			\end{cases}
		\]
		Since all the elements of height $0$ are in $A$, $\psi(y)$ is well-defined. We then define $\tilde\psi = \pi \circ \psi$ by applying $\psi$ and then forgetting the $A$-coordinates.
		We will prove that, when restricted to $\mathcal{C}(P, A)_\lambda$, the map $\tilde\psi$ is the inverse of $\tilde\phi$.
		
		First we show that $\tilde\psi \circ \tilde\phi$ is the identity on $\mathcal{O}(P, A)_\lambda$.
		We begin by showing that  $\psi \circ \tilde\phi = i$; \emph{i.e.}, that if $x \in \mathcal{O}(P, A)_\lambda$ and $y = \tilde\phi(x)$ then $i(x) = \psi(y)$. We prove $i(x)_p = \psi(y)_p$ by induction on $\height(p)$.
		The claim certainly holds for $\height(p) = 0$.
		Suppose that we have proved it for all elements of height at most $n$, and let $p$ have height $n+1$. 
		If $p \in A$, then
		\[
			\psi(y)_p = \lambda_p = i(x)_p
		\]
		by definition. Otherwise, if $p \notin A$, we have 
		\begin{align*}
			\psi(y)_p
			&= y_p + \max{} \{\psi(y)_q \suchthat p \succ q \} \\
			&= y_p + \max{} \{i(x)_q \suchthat p \succ q \} .
		\end{align*}
		by the inductive hypothesis.
		As
		\begin{align*}
			y_p
			&= \tilde\phi(x)_p
			= \pi(\phi(i(x)))_p
			= \phi(i(x))_p \\
			&= \min{} \{ i(x)_p - i(x)_q \suchthat p \succ q \} \\
			&= i(x)_p - \max{} \{ i(x)_q \suchthat p \succ q \},
		\end{align*}
		we conclude that $\psi(y)_p = i(x)_p$, as desired.

		We have shown that $\psi \circ \tilde\phi = i$. By composing with the projection which forgets the $A$ coordinates, we obtain that $\tilde\psi \circ \tilde\phi$ is the identity on $\mathcal{O}(P, A)_\lambda$.
		Hence $\tilde\phi$ is injective.

		To prove surjectivity, let $y \in \mathcal{C}(P,A)_\lambda$ and define $x = \tilde\psi(y) \in \mathbf{R}^{P-A}$.
		We start by showing that $x \in \mathcal{O}(P,A)_\lambda$.
		Let $p \in P - A$.
		By definition,
		\[
			x_p
			= \psi(y)_p
			= y_p + \max{} \{\psi(y)_q \suchthat p \succ q \}
		\]
		As $y_p \geq 0$, this implies $x_p \geq \psi(y)_q$ for all $q$ such that $p \succ q$. If $q \in A$, this says that $x_p \geq \lambda_q$. If $q \notin A$, this says that $x_p \geq x_q$.
		As $p$ is arbitrary, it follows that $x \in \mathcal{O}(P,A)_\lambda$. 
		
		Finally, we claim that $\tilde\phi(x)=y$. Once again, we prove that $\tilde\phi(x)_p = y_p$ for all $p \in P-A$ by induction on the height of $p$.
		For height $0$ this statement is vacuous.
		Suppose that it holds for all elements of height at most $n$, and consider $p \in P - A$ with $\height(p) = n+1$.
		Then
		\begin{align*}
			\tilde\phi(x)_p
			&= \min{} \{ i(x)_p - i(x)_q \suchthat p \succ q \} \\
			&= \min{} \{ \psi(y)_p - \psi(y)_q \suchthat p \succ q \} \\
			&= \psi(y)_p - \max{} \{ \psi(y)_q \suchthat p \succ q \} \\
			&= y_p + \max{} \{ \psi(y)_q \suchthat p \succ q \} - \max{} \{ \psi(y)_q \suchthat p \succ q \} \\
			&= y_p,
		\end{align*}
		as desired.
		We have shown that $\tilde\phi \circ \tilde\psi$ is the identity on $\mathcal{C}(P,A)_\lambda$, hence $\tilde\phi$ is surjective.
		
		We conclude that $\tilde\psi: \mathcal{C}(P,A)_\lambda \to \mathcal{O}(P,A)_\lambda$ and $\tilde\phi: \mathcal{O}(P,A)_\lambda \to \mathcal{C}(P,A)_\lambda$ are inverse functions, and therefore bijective, as we wished to show. 
		The fact that they are continuous and piecewise affine follows directly from the definitions.
	\end{proof}

	We conclude this section with the generalization of the second part of \autoref{stanley's-theorem}, the compatibility of the transfer map with the integral lattice.
	
	\begin{theorem} \label{thm:PhiLatticePoint}
		Let $(P,A, \lambda)$ be a marked poset with $\lambda \in \mathbf{Z}^A$.
		Then $\tilde\phi$ restricts to a bijection between $\mathcal{O}(P, A)_\lambda \cap \frac1m \mathbf{Z}^{P-A}$ and $\mathcal{C}(P, A)_\lambda \cap \frac1m \mathbf{Z}^{P-A}$.
		Therefore $\mathcal{O}(P, A)_\lambda$ and $\mathcal{C}(P, A)_\lambda$ have the same Ehrhart polynomial.
	\end{theorem}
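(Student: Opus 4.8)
The plan is to observe that the mutually inverse bijections $\tilde\phi$ and $\tilde\psi$ constructed in the proof of \autoref{lem:genPhi} are built only from additions, subtractions, minima, maxima, and the vector $\lambda$; since $\lambda \in \mathbf{Z}^A$, they preserve the refined lattice $\frac1m\mathbf{Z}^{P-A}$. The bijection on $\frac1m$-lattice points is then immediate from \autoref{lem:genPhi}, and the equality of Ehrhart polynomials follows by a rescaling argument.

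First I would check that $\tilde\phi$ maps $\frac1m\mathbf{Z}^{P-A}$ into itself. If $x \in \frac1m\mathbf{Z}^{P-A}$, then for each $p \in P-A$ and each $q$ with $p \succ q$, the number $x_p - x_q$ (when $q \notin A$) or $x_p - \lambda_q$ (when $q \in A$, using $\lambda_q \in \mathbf{Z} \subseteq \frac1m\mathbf{Z}$) lies in $\frac1m\mathbf{Z}$; hence so does $\tilde\phi(x)_p$, being the minimum of finitely many such numbers. For the inverse, recall $\tilde\psi = \pi \circ \psi$, and I would show $\psi(y)_p \in \frac1m\mathbf{Z}$ for all $p \in P$ whenever $y \in \frac1m\mathbf{Z}^{P-A}$, by induction on $\height(p)$: when $p \in A$ we have $\psi(y)_p = \lambda_p \in \mathbf{Z}$, and when $p \notin A$ we have $\psi(y)_p = y_p + \max\{\psi(y)_q \suchthat p \succ q\}$, a sum of elements of $\frac1m\mathbf{Z}$ by the inductive hypothesis (the $q$ with $p \succ q$ have strictly smaller height). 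Thus $\tilde\psi$ preserves $\frac1m\mathbf{Z}^{P-A}$ as well. Since by \autoref{lem:genPhi} the maps $\tilde\phi \colon \mathcal{O}(P,A)_\lambda \to \mathcal{C}(P,A)_\lambda$ and $\tilde\psi \colon \mathcal{C}(P,A)_\lambda \to \mathcal{O}(P,A)_\lambda$ are mutually inverse, they restrict to mutually inverse bijections between $\mathcal{O}(P,A)_\lambda \cap \frac1m\mathbf{Z}^{P-A}$ and $\mathcal{C}(P,A)_\lambda \cap \frac1m\mathbf{Z}^{P-A}$.

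For the Ehrhart statement I would first note that $\mathcal{O}(P,A)_\lambda = \{x \suchthat Mx \le v\}$, where every row of $M$ has at most one entry equal to $+1$ and at most one entry equal to $-1$ (all others $0$) and $v$ is integral; such an $M$ is totally unimodular, and since $A$ contains all extremal elements the polyhedron is bounded, so $\mathcal{O}(P,A)_\lambda$ is a lattice polytope with a genuine Ehrhart polynomial $E$. Then, using the elementary bijection $z \mapsto z/n$, which gives $\lvert nQ \cap \mathbf{Z}^{P-A}\rvert = \lvert Q \cap \tfrac1n\mathbf{Z}^{P-A}\rvert$ for any polytope $Q$ and any $n \ge 1$, together with the bijection just established,
\[
E(n) = \bigl\lvert n\mathcal{O}(P,A)_\lambda \cap \mathbf{Z}^{P-A}\bigr\rvert = \bigl\lvert \mathcal{O}(P,A)_\lambda \cap \tfrac1n\mathbf{Z}^{P-A}\bigr\rvert = \bigl\lvert \mathcal{C}(P,A)_\lambda \cap \tfrac1n\mathbf{Z}^{P-A}\bigr\rvert = \bigl\lvert n\mathcal{C}(P,A)_\lambda \cap \mathbf{Z}^{P-A}\bigr\rvert
\]
for every $n \ge 1$. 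Hence the lattice-point enumerator of $\mathcal{C}(P,A)_\lambda$ agrees with the polynomial $E$ on all positive integers, so $\mathcal{C}(P,A)_\lambda$ is itself a lattice polytope with Ehrhart polynomial $E$, and the two polytopes share this Ehrhart polynomial.

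I do not expect a serious obstacle: the heart of the matter is simply that the transfer map and its inverse have integer coefficients and that $\lambda$ was assumed integral, so $\frac1m\mathbf{Z}$ is respected. The one technical point I would be careful to record is the total unimodularity of the defining system of $\mathcal{O}(P,A)_\lambda$, which is what makes ``Ehrhart polynomial'' legitimate (rather than only an Ehrhart quasi-polynomial) and what lets the rescaling argument conclude the statement for $\mathcal{C}(P,A)_\lambda$ as well.
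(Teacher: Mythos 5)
Your argument is correct and is exactly the paper's approach: the paper's proof of this theorem is the one-line observation that $\tilde\phi$ and $\tilde\psi$ from \autoref{lem:genPhi} preserve integrality (you have simply written out the height induction and the rescaling $\lvert nQ \cap \mathbf{Z}^{P-A}\rvert = \lvert Q \cap \tfrac1n \mathbf{Z}^{P-A}\rvert$ that the paper leaves implicit, plus the total-unimodularity remark justifying that $\mathcal{O}(P,A)_\lambda$ is a lattice polytope). One small caveat: your final inference that $\mathcal{C}(P,A)_\lambda$ is a \emph{lattice polytope} because its counting function is a polynomial is not valid in general (period collapse can occur for non-lattice polytopes), but this side remark is not needed for the statement being proved, which only requires the equality of the counting functions you have already established.
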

	
	\begin{proof}
		This follows immediately from the proof of \autoref{lem:genPhi}, as both $\tilde\phi$ and $\tilde\psi$ preserve integrality.
	\end{proof}

	It is worth noting that \autoref{thm:PhiLatticePoint} does not hold for general $\lambda \in \mathbf{R}^A$.

\section{\textsf{Applications}}
\label{sec:applications}

	We now show how marked poset polytopes occur ``in nature" in the representation theory of semisimple Lie algebras. More concretely, marked order polytopes occur as Gelfand--Tsetlin polytopes in type $A$, $B$, and $C$, and marked chain polytopes occur as Feigin--Fourier--Littelmann polytopes in type $A$.

\subsection{\textsf{Type $A$.}}

	Let $\lambda$ be a dominant integral weight for $\mathfrak{sl}_n$.
	Let $\mathcal{O}(P,A)_\lambda$ and $\mathcal{C}(P,A)_\lambda$ be the marked order and chain polytopes determined by the marked Hasse diagram given in \autoref{neDyVEAe}.
	\begin{figure}[h]
		\centering
		\begin{tikzpicture}[x=(45:2em),y=(-45:2em)]
			\draw (0,0) -- (3,0);
			\draw (0,1) -- (2,1);
			\draw (0,2) -- (1,2);
			\draw (0,3) -- (0,3);
			\draw (0,0) -- (0,3);
			\draw (1,0) -- (1,2);
			\draw (2,0) -- (2,1);
			\filldraw (0,0) circle(0.4ex);
			\filldraw (1,0) circle(0.4ex);
			\filldraw (2,0) circle(0.4ex);
			\filldraw (0,1) circle(0.4ex);
			\filldraw (1,1) circle(0.4ex);
			\filldraw (0,2) circle(0.4ex);
			\draw (3,0) node[fill=white,inner sep=0.2ex]{$\lambda_1$};
			\draw (2,1) node[fill=white,inner sep=0.2ex]{$\lambda_2$};
			\draw (1,2) node[fill=white,inner sep=0.2ex,text height=2ex]{$\vdots$};
			\draw (0,3) node[fill=white,inner sep=0.2ex]{$\lambda_n$};

		\end{tikzpicture}
		\caption{Marked Hasse diagram for $\mathfrak{sl}_n$.}
		\label{neDyVEAe}
	\end{figure}
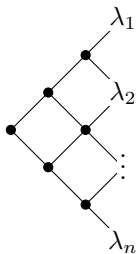
	Note that \autoref{neDyVEAe} is obtained from \autoref{zczmmXWC} by a clockwise rotation by $90^\circ$.
	Hence from the definitions it is immediate that $\mathrm{GT}(\lambda) = \mathcal{O}(P, A)_\lambda$.	
	Similarly, it follows immediately from the definitions that $\mathrm{FFL}(\lambda) = \mathcal{C}(P, A)_\lambda$.
	Hence the equation
	\[
		\lvert \mathrm{FFL}(\lambda) \cap \mathbf{Z}^{\Phi_+} \rvert = \lvert \mathrm{GT}(\lambda) \cap \mathbf{Z}^{n(n-1)/2} \rvert
	\]
	is implied by \autoref{thm:PhiLatticePoint}.
	
	It would be interesting to see whether the explicit bijection of \autoref{thm:PhiLatticePoint} gives interesting information about the transition matrix between the Gelfand--Tsetlin basis and the Feigin--Fourier--Littelmann basis of $V(\lambda)$.

\subsection{\textsf{Type $C$.}}

	Now consider the symplectic Lie algebra $\mathfrak{sp}_{2n}$.
	Here the role of Gelfand--Tsetlin patterns is played by the generalized Gelfand--Tsetlin patterns defined by Berenstein and Zelevinsky \cite{berenstein1989}.
	Fix a Cartan subalgebra $\mathfrak{h} \subset \mathfrak{sp}_{2n}$.
	Choose simple roots $\alpha_1, \ldots, \alpha_n \in \mathfrak{h}^*$ such that $\alpha_i \not\perp \alpha_{i + 1}$ for $i < n$ and $\alpha_n$ is the long root.
	Let $\epsilon_1, \ldots, \epsilon_n$ be the basis of $\mathfrak{h}^*$ such that $\alpha_i = \epsilon_i - \epsilon_{i+1}$ for $i < n$ and $\alpha_n = 2\epsilon_n$.
	The corresponding fundamental weights are $\omega_i = \epsilon_1 + \cdots + \epsilon_i$.
	This is the setting as used by Bourbaki \cite{bourbaki1981}.
	We identify a weight $\lambda$ with the $n$-tuple $(\lambda_1, \ldots, \lambda_n)$ of its coefficients with respect to the basis $\epsilon_1, \ldots, \epsilon_n$.
	Then dominant integral weights correspond to weakly decreasing $n$-tuples of nonnegative integers.
	Given a dominant integral weight $\lambda$, Berenstein and Zelevinsky define an \definedterm{$\mathfrak{sp}_{2n}$-pattern} of highest weight $\lambda$ to be a filling of the board in \autoref{ubf7JTGE} with nonnegative integers, such that every number is bounded from above by its upper left neighbor and bounded from below by its upper right neighbor (if any).
	\begin{figure}[h]
		\centering
		\begin{tikzpicture}[x=2em,y=-2em]
			\draw (0,0) rectangle +(1,1);
			\draw (1,0) rectangle +(1,1);
			\draw (2,0) rectangle +(1,1);
			\draw (3,0) rectangle +(1,1);
			\draw (0.5,1) rectangle +(1,1);
			\draw (1.5,1) rectangle +(1,1);
			\draw (2.5,1) rectangle +(1,1);
			\draw (3.5,1) rectangle +(1,1);
			\draw (1,2) rectangle +(1,1);
			\draw (2,2) rectangle +(1,1);
			\draw (3,2) rectangle +(1,1);
			\draw (1.5,3) rectangle +(1,1);
			\draw (2.5,3) rectangle +(1,1);
			\draw (3.5,3) rectangle +(1,1);
			\draw (2,4) rectangle +(1,1);
			\draw (3,4) rectangle +(1,1);
			\draw (2.5,5) rectangle +(1,1);
			\draw (3.5,5) rectangle +(1,1);
			\draw (3,6) rectangle +(1,1);
			\draw (3.5,7) rectangle +(1,1);
			\draw (0.5,0.5) node{$\lambda_1$};
			\draw (1.5,0.5) node{$\lambda_2$};
			\draw (2.5,0.5) node{$\cdots$};
			\draw (3.5,0.5) node{$\lambda_n$};
		\end{tikzpicture}
		\caption{Board defining generalized Gelfand--Tsetlin patterns for $\mathfrak{sp}_{2n}$ and $\mathfrak{o}_{2n+1}$.}
		\label{ubf7JTGE}
	\end{figure}
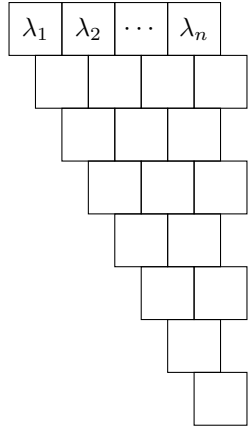	
	They show that $\dim V(\lambda)$ is the number of such patterns \cite[Th.\ 4.2]{berenstein1989}.
	
	Let $\mathcal{O}(P,A)_{(\lambda,0)}$ and $\mathcal{C}(P,A)_{(\lambda,0)}$ be the marked order and chain polytopes determined by the marked Hasse diagram given in \autoref{LCvhqtMX}.
	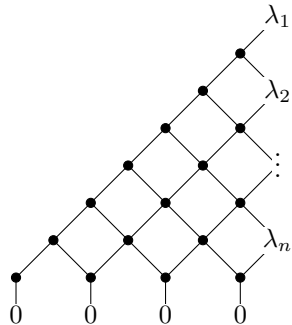
\begin{figure}
		\centering
		\begin{tikzpicture}[x=1.414em,y=1.414em]
			\draw (0,0) -- (7,7);
			\draw (2,0) -- (7,5);
			\draw (4,0) -- (7,3);
			\draw (6,0) -- (7,1);
			\draw (1,1) -- (2,0);
			\draw (2,2) -- (4,0);
			\draw (3,3) -- (6,0);
			\draw (4,4) -- (7,1);
			\draw (5,5) -- (7,3);
			\draw (6,6) -- (7,5);
			\draw (0,0) -- (0,-1);
			\draw (2,0) -- (2,-1);
			\draw (4,0) -- (4,-1);
			\draw (6,0) -- (6,-1);
			\draw (7,7) node[fill=white,inner sep=0.2ex]{$\lambda_1$};
			\draw (7,5) node[fill=white,inner sep=0.2ex]{$\lambda_2$};
			\draw (7,3) node[fill=white,inner sep=0.2ex,text height=2ex]{$\vdots$};
			\draw (7,1) node[fill=white,inner sep=0.2ex]{$\lambda_n$};
			\draw (0,-1) node[fill=white,inner sep=0.2ex]{$0$};
			\draw (2,-1) node[fill=white,inner sep=0.2ex]{$0$};
			\draw (4,-1) node[fill=white,inner sep=0.2ex]{$0$};
			\draw (6,-1) node[fill=white,inner sep=0.2ex]{$0$};
			\filldraw (0,0) circle(0.4ex);
			\filldraw (2,0) circle(0.4ex);
			\filldraw (4,0) circle(0.4ex);
			\filldraw (6,0) circle(0.4ex);
			\filldraw (2,2) circle(0.4ex);
			\filldraw (4,2) circle(0.4ex);
			\filldraw (6,2) circle(0.4ex);
			\filldraw (4,4) circle(0.4ex);
			\filldraw (6,4) circle(0.4ex);
			\filldraw (6,6) circle(0.4ex);
			\filldraw (1,1) circle(0.4ex);
			\filldraw (3,1) circle(0.4ex);
			\filldraw (5,1) circle(0.4ex);
			\filldraw (3,3) circle(0.4ex);
			\filldraw (5,3) circle(0.4ex);
			\filldraw (5,5) circle(0.4ex);
		\end{tikzpicture}
		\caption{Marked Hasse diagram for $\mathfrak{sp}_{2n}$ and $\mathfrak{o}_{2n+1}$.}
		\label{LCvhqtMX}
	\end{figure}
	Note that \autoref{LCvhqtMX} is obtained from \autoref{ubf7JTGE} by a clockwise rotation by $90^\circ$ and apposition of the zeroes.
	From the definitions it is immediate that the $\mathfrak{sp}_{2n}$-patterns of highest weight $\lambda$ are the integral points in $\mathcal{O}(P,A)_{(\lambda,0)}$. This suggests the following:
	
	\begin{conjecture}
	The lattice points in $\mathcal{C}(P,A)_{(\lambda,0)}$ parametrize a PBW basis of $V(\lambda)$ for the symplectic Lie algebras, as described in \autoref{F6Z6kCUH} and in \cite[Theorem 3.11]{feigin2010}.
	\end{conjecture}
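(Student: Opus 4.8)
The approach splits into an \emph{enumerative} half, which is essentially already done in this paper, and a \emph{representation-theoretic} half, which is where the real work lies. For the first half, apply \autoref{thm:PhiLatticePoint} to the marked poset of \autoref{LCvhqtMX}: the marked chain polytope $\mathcal{C}(P,A)_{(\lambda,0)}$ has the same number of lattice points as the marked order polytope $\mathcal{O}(P,A)_{(\lambda,0)}$, and by the observation preceding this conjecture together with \cite[Th.\ 4.2]{berenstein1989} the latter count is the number of $\mathfrak{sp}_{2n}$-patterns of highest weight $\lambda$, hence equals $\dim V(\lambda)$. So
\[
\lvert \mathcal{C}(P,A)_{(\lambda,0)} \cap \mathbf{Z}^{P-A} \rvert = \dim V(\lambda),
\]
and it suffices to exhibit \emph{any} family of PBW monomials indexed by these lattice points which is a basis. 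Since the cardinality is already correct, it is enough to prove \emph{either} that the monomials span $V(\lambda)$ \emph{or} that they are linearly independent; the complementary statement then follows for free.

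The first concrete step is bookkeeping: identify $P-A$ with the set $\Phi_+$ of positive roots of $\mathfrak{sp}_{2n}$ by reading off the unmarked vertices of \autoref{LCvhqtMX} (equivalently the empty boxes of \autoref{ubf7JTGE}). One expects the "lower'' part of the board to account for the roots $\epsilon_i - \epsilon_j$ and the reflected part for $\epsilon_i + \epsilon_j$ and $2\epsilon_i$, arranged so that a saturated chain $a < p_1 < \cdots < p_k < b$ in $P$ becomes a Dyck-path-like sequence of positive roots and $\lambda_b - \lambda_a$ becomes the matching partial sum $m_i + \cdots + m_j$ of the coefficients of $\lambda$ in the fundamental-weight basis. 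One then fixes a total order on $\Phi_+$ as in the discussion preceding \cite[Th.\ 3.11]{feigin2010} and sets $f^s v_\lambda = \bigl(\prod_{\alpha \in \Phi_+} f_\alpha^{s_\alpha}\bigr) v_\lambda$, so that the claim to be proved is precisely that $\{\, f^s v_\lambda \mid s \in \mathcal{C}(P,A)_{(\lambda,0)} \cap \mathbf{Z}^{P-A} \,\}$ is a basis of $V(\lambda)$.

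For spanning one would imitate the PBW-degeneration argument of \cite{feigin2010}: pass to the associated graded module $\mathrm{gr}\, V(\lambda)$ for the PBW filtration, which is cyclic over the symmetric algebra $S(\mathfrak{n}^-)$, so that all monomials $f^s v_\lambda$ span; the content is then to rewrite each such monomial, modulo lower filtration degree, as a combination of those supported on $\mathcal{C}(P,A)_{(\lambda,0)}$. The relations to iterate are the $\mathfrak{sl}_2$-string relations $f_\alpha^{\langle\lambda,\alpha^\vee\rangle+1}v_\lambda = 0$ together with Serre-type straightening relations among the $f_\alpha$, and the marked-chain inequalities should be exactly those produced by pushing these relations along saturated chains of the poset. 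This is the step I expect to be the main obstacle: in type $A$ the Dyck-path combinatorics already required a dedicated argument in \cite{feigin2010}, and the Berenstein--Zelevinsky poset for $\mathfrak{sp}_{2n}$ is more intricate (two families of short and long roots, a reflected board), so one needs either a new path-straightening lemma tailored to this poset or a reduction to the symplectic PBW-basis machinery developed after this paper. Once spanning is established, linear independence is automatic from the count above; alternatively independence could be attacked directly via a monomial order on $S(\mathfrak{n}^-)$ whose "essential'' supports are forced into $\mathcal{C}(P,A)_{(\lambda,0)}$, but this does not look easier. As a final check one should verify the identification of the second paragraph and the whole argument for $n = 1, 2$ and for fundamental weights $\lambda$, where everything is small enough to compute explicitly and to compare with the known Feigin--Fourier--Littelmann basis for $\mathfrak{sp}_{2n}$.
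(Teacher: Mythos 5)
The statement you are addressing is a \emph{conjecture} in this paper: the authors give no proof and only remark that one appears in a then-in-preparation article of Feigin, Fourier, and Littelmann, so there is no argument of theirs to compare yours against. Your enumerative half is correct and is exactly what the paper's machinery yields: \autoref{thm:PhiLatticePoint} applied to the marked poset of \autoref{LCvhqtMX}, combined with \cite[Th.\ 4.2]{berenstein1989}, gives $\lvert \mathcal{C}(P,A)_{(\lambda,0)} \cap \mathbf{Z}^{P-A} \rvert = \dim V(\lambda)$, and the reduction to proving \emph{either} spanning \emph{or} linear independence of the candidate monomial family is sound.

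The representation-theoretic half, however, is a plan rather than a proof, and the gap is exactly where you locate it. Two steps are missing in a way that cannot be waved through. First, the dictionary between $P-A$ and the positive roots of $\mathfrak{sp}_{2n}$, and between the chain bounds $\lambda_b - \lambda_a$ and the sums of fundamental-weight coefficients, is only asserted (``one expects''); since the inequalities defining the conjectural polytope are read off from this dictionary, it must be fixed explicitly before the monomials $f^s v_\lambda$ are even well defined. Second, and more seriously, the spanning argument needs an actual straightening lemma in $\mathrm{gr}\, U(\mathfrak{n}^-)$: one must show that, modulo the annihilator of $v_\lambda$ in the associated graded module, every monomial is congruent to a combination of monomials supported on the marked chain polytope, and that the inequalities so produced are precisely the chain inequalities of \autoref{LCvhqtMX}. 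In type $A$ this is the hardest part of \cite{feigin2010}; for $\mathfrak{sp}_{2n}$ the relevant paths involve both root families $\epsilon_i - \epsilon_j$ and $\epsilon_i + \epsilon_j$, $2\epsilon_i$, and no such lemma is stated or proved in your proposal. Until it is supplied, your argument establishes only the lattice-point count, which is the part the paper already proves; the conjecture itself remains open in your write-up, consistent with its status in the paper.
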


	Indeed, this conjecture is proved in an article in preparation by Feigin, Fourier, and Littelmann. \cite{fourier2010}
	
\subsection{\textsf{Type $B$.}}

	For the odd orthogonal Lie algebra $\mathfrak{o}_{2n+1}$, the situation is a bit more complicated.
	Fix a Cartan subalgebra $\mathfrak{h} \subset \mathfrak{o}_{2n+1}$.
	Choose simple roots $\alpha_1, \ldots, \alpha_n \in \mathfrak{h}^*$ such that $\alpha_i \not\perp \alpha_{i + 1}$ for $i < n$ and $\alpha_n$ is the short root.
	Let $\epsilon_1, \ldots, \epsilon_n$ be the basis of $\mathfrak{h}^*$ such that $\alpha_i = \epsilon_i - \epsilon_{i+1}$ for $i < n$ and $\alpha_n = \epsilon_n$.
	The corresponding fundamental weights are $\omega_i = \epsilon_1 + \cdots + \epsilon_i$ for $i < n$ and $\omega_n = \frac 12(\epsilon_1 + \cdots + \epsilon_n)$.
	This is the setting as used by Bourbaki \cite{bourbaki1981}.
	We identify a weight $\lambda$ with the $n$-tuple $(\lambda_1, \ldots, \lambda_n)$ of its coefficients with respect to the basis $\epsilon_1, \ldots, \epsilon_n$.
	Then dominant integral weights correspond to weakly decreasing $n$-tuples in $\frac 12 \mathbf{Z}_{\geq 0}$ such that either all or none of the components are integers.
	Given a dominant integral weight $\lambda$, Berenstein and Zelevinsky \cite{berenstein1989} define an \definedterm{$\mathfrak{o}_{2n+1}$-pattern} of highest weight $\lambda$ to be a filling of the board in \autoref{ubf7JTGE} with elements of $\frac 12 \mathbf{Z}_{\geq 0}$ such that every number is bounded from above by its upper left neighbor and bounded from below by its upper right neighbor (if any), and such that all numbers which possess an upper right neighbor are congruent to $\lambda_1$ modulo $\mathbf{Z}$.
	Let $R(\lambda)$ be the set of $\mathfrak{o}_{2n+1}$-patterns of highest weight $\lambda$.
	
	As in type $C$, let $\mathcal{O}(P,A)_{(\lambda,0)}$ be the marked order polytope defined by the marked Hasse diagram in \autoref{LCvhqtMX}.
	Then $R(\lambda) \subset \mathcal{O}(P,A)_{(\lambda,0)}$, but $R(\lambda)$ does not consist of the integral points, but of the points determined by more complicated congruence conditions.
	Namely, decompose
	\[
		P - A = P' \cup P'' \cup P''' ,
	\]
	where $P'$, $P''$, and $P'''$ consist of all elements in $P$ of height $1$, $2$, and $\geq 3$, respectively, that are not contained in $A$.
	Then $R(\lambda)$ consists of all $x \in \mathcal{O}(P,A)_{(\lambda,0)} \cap (\frac 12 \mathbf{Z})^{P-A}$ such that $x_p + \lambda_1 \in \mathbf{Z}$ for all $p \in P'' \cup P'''$.
	Hence $S(\lambda) = \tilde\phi(R(\lambda))$ consists of all
	\[
		y \in \mathcal{C}(P,A)_{(\lambda,0)} \cap \left( (\tfrac 12 \mathbf{Z})^{P' \cup P''} \times \mathbf{Z}^{P'''} \right)
	\]
	such that
	\[
		\max{} \{ y_q : p \succ q \} + y_p + \lambda_1 \in \mathbf{Z}
	\]
	for all $p \in P''$.
	From the point of view taken in this article, $S(\lambda)$ appears to be the most natural candidate to parametrize a PBW basis of \cite{feigin2010} in type $C$. Note that the elements of $S(\lambda)$ can not appear directly as exponent vectors of a PBW basis, as their components are not necessarily integral, so we are missing at least a change of coordinates in this case.
	\begin{question}
	Is there a way to modify $S(\lambda)$ so that it parametrizes a PBW basis of $V(\lambda)$ for the odd orthogonal Lie algebras, as described in \autoref{F6Z6kCUH} and in \cite[Theorem 3.11]{feigin2010}?
	\end{question}

\subsection{\textsf{Type $D$.}}

	The generalized Gelfand--Tsetlin polytopes \cite{berenstein1989} for the even orthogonal Lie algebras $\mathfrak{o}_{2n}$ are not marked order polytopes, so our methods do not apply here. It would be interesting to find a suitable modification of our results to this case.

\providecommand{\doi}[1]{\href{http://dx.doi.org/#1}{\nolinkurl{doi:#1}}}
\providecommand{\arxiv}[1]{\href{http://arxiv.org/abs/#1}{\nolinkurl{arXiv:#%
1}}}

\end{document}